\newtheorem{theorem}{Theorem}
\theoremstyle{plain}
\newtheorem{definition}{Definition}
\numberwithin{equation}{section}
\begin{document}
\title{On Approximation Properties for Non-linear Integral Operators}
\author{Sevgi ESEN ALMALI}
\address{Science and Art Faculty, Department of Mathematics, Kirikkale
University, Kirikkale-Turkey}
\email{sevgi\_esen@hotmail.com}
\urladdr{http://www.kku.edu.tr}
\thanks{The auhor wish to thank Prof. Dr. A. D. Gadjiev for the useful
suggestions in the matter.}
\subjclass{}
\keywords{Pointwise convergence, non-linear integral operators, lebesque
point.}

\begin{abstract}
We investigate the problem of pointwise convergence of the family of
non-linear integral operators%
\begin{equation*}
L_{\lambda
}(f,x)=\dint\limits_{a}^{b}\dsum\limits_{m=1}^{N}f^{m}(t)K_{\lambda
,m}(x,t)dt,
\end{equation*}%
where $\lambda $ is a real parameters, $K_{\lambda ,m}(x,t)$ is non-negative
kernel and $f$ is the function in $L_{1}(a,b)$. We consider two cases where $%
(a,b)$ is a finite interval and when is the whole real axis..
\end{abstract}

\maketitle

\textbf{1.\bigskip Introduction}

In $\left[ 6\right] $ the concept of singularity was extended to cover the
case of nonlinear integral operators,%
\begin{equation*}
T_{w}f(s)=\int_{G}K_{w}(t-s,f(t))dt,\text{ \ s}\in G
\end{equation*}%
the assumption of linearity of the operators being replaced by an assumption
of a Lipschitz condition for $K_{w}$ with respect to the second variable.
Recently, Swiderski and Wachnicki $\left[ 7\right] $ investigated the
pointwise convergence of the operators $T_{w}f$ in $L_{p}(-\pi ,\pi )$ and $%
L_{p}(R)$ at the points, where $x_{0}$ is a point of continuity and a
Lebesque point of $f.$ \bigskip

In $\left[ 2\right] ,$ Karsli studied both the pointwise convergence and the
rate of pointwise convergence of above operators on a $\mu -generelized$ \
Lebesque point to $f\in L_{1}(a,b)$ as $(x,\lambda )\rightarrow
(x_{0},\lambda _{0}).$In $\left[ 1\right] ,$ it is studied the rate of
convergence at a point $x,$ which has a discontinuity of \ the first kinds
as $\lambda \rightarrow \lambda _{0}.$In $\left[ 4\right] $ they obtained
convergence results and rate of approximation for functions belonging to
BV-spaces by mean of nonlinear convolution integral operators.

The aim of the article is to obtain pointwise convergence results for a
family of non-linear operators of the form%
\begin{equation}
L_{\lambda
}(f,x)=\dsum\limits_{m=1}^{N}\dint\limits_{a}^{b}f^{m}(t)K_{\lambda
,m}(x,t)dt  \tag{1}
\end{equation}%
where $K_{\lambda ,m}(x,t)$ is a family of kernels depending on $\lambda .$%
We study convergence of the family (1) at every Lebesque point of the
function $f$ in the spaces of $L_{1}(a,b)$ and $L_{1}(-\infty ,\infty ).$

Now we give the following definition.

\begin{definition}
(Class A): We take a family ($K_{\lambda })_{\lambda \in \Lambda }$ of
functions $K_{\lambda ,m}(x,t):RXR\rightarrow R$. We will say that the
function $K_{\lambda }(x,t)$ belongs the class A, if the following
conditions are satisfied.
\end{definition}

a) $K_{\lambda ,m}(x,t)$ is a non-negative function defined for all $t$ on $%
(a,b)$ and $\lambda \in \Lambda .$

b) As function of $t$, $K_{\lambda ,m}(x,t)$ is non-decreasing on $\left[ a,x%
\right] $ and non-increasing on $\left[ x,b\right] $ for any fixed $x$

c)For any fixed $x,$ $\underset{\lambda \rightarrow \infty }{\lim }%
\dint\limits_{a}^{b}K_{\lambda ,m}(x,t)dt=C_{m}$.

d ) For every $m,\underset{\lambda \rightarrow \infty }{1\leq m\leq N\text{
and every }y\neq x\text{ , }\lim }K_{\lambda ,m}(x,y)=0.$\bigskip \newline

\bigskip \textbf{2.} \textbf{Main Result}

We are going to prove the family of \ non-linear integral operators (1) with
the positive kernel convergence to the functions $f\in L_{1}(a,b)$

\begin{theorem}
Suppose that $f\in L_{1}(a,b)$ and $f$ is bounded on $(a,b).$ If
non-negative the kernel $K_{\lambda ,m}$ belongs to Class A, then, for the
operator $L_{\lambda }(f,x)$ which is defined in (1) 
\begin{equation*}
\lim_{\lambda \rightarrow \infty }L_{\lambda
}(f,x_{0})=\dsum\limits_{m=1}^{N}C_{m}f^{m}(x_{0})
\end{equation*}%
holds at every $x_{0}-$lebesque point of $f$ function
\end{theorem}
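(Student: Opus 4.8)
The plan is to reduce the statement to one power at a time: since the operator in (1) is a finite sum over $m$, it suffices to prove, for each fixed $m$ with $1\le m\le N$, that
\[
I_m(\lambda):=\int_a^b f^m(t)\,K_{\lambda,m}(x_0,t)\,dt\ \longrightarrow\ C_m\,f^m(x_0)\qquad(\lambda\to\infty),
\]
and then add the $N$ limits. I would begin with two reductions. First, because $f$ is bounded, say $|f|\le M$ on $(a,b)$, each $f^m$ lies in $L_1(a,b)$, and from
\[
f^m(t)-f^m(x_0)=\bigl(f(t)-f(x_0)\bigr)\sum_{j=0}^{m-1}f^{j}(t)\,f^{\,m-1-j}(x_0)
\]
one gets $|f^m(t)-f^m(x_0)|\le m\,M^{m-1}\,|f(t)-f(x_0)|$, so every Lebesgue point $x_0$ of $f$ is a Lebesgue point of $f^m$. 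Setting $\psi(t):=|f^m(t)-f^m(x_0)|$, this means $\psi\in L_1(a,b)$ and $\int_{x_0}^{x_0+s}\psi=o(s)$, $\int_{x_0-s}^{x_0}\psi=o(s)$ as $s\to0^+$. Second, using condition (c) of Class A,
\[
I_m(\lambda)-C_m f^m(x_0)=\int_a^b\bigl(f^m(t)-f^m(x_0)\bigr)K_{\lambda,m}(x_0,t)\,dt+f^m(x_0)\Bigl(\int_a^b K_{\lambda,m}(x_0,t)\,dt-C_m\Bigr),
\]
and the last term tends to $0$; so everything reduces to making the first integral small.

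Fix $\varepsilon>0$ and choose $\delta>0$ so small that $[x_0-\delta,x_0+\delta]\subset(a,b)$, $\int_{x_0}^{x_0+s}\psi\le\varepsilon s$ and $\int_{x_0-s}^{x_0}\psi\le\varepsilon s$ for $0<s\le\delta$. I would split $\int_a^b\psi(t)K_{\lambda,m}(x_0,t)\,dt$ into the integrals over $[a,x_0-\delta]$, $[x_0-\delta,x_0+\delta]$ and $[x_0+\delta,b]$. On the two outer intervals, the monotonicity in (b) gives $K_{\lambda,m}(x_0,t)\le K_{\lambda,m}(x_0,x_0-\delta)$ for $t\le x_0-\delta$ and $K_{\lambda,m}(x_0,t)\le K_{\lambda,m}(x_0,x_0+\delta)$ for $t\ge x_0+\delta$, so their contribution is at most $\bigl(K_{\lambda,m}(x_0,x_0-\delta)+K_{\lambda,m}(x_0,x_0+\delta)\bigr)\int_a^b\psi(t)\,dt$, which tends to $0$ as $\lambda\to\infty$ by condition (d), since $\int_a^b\psi<\infty$ (here finiteness of $(a,b)$ is used).

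The remaining step — and the one I expect to be the only delicate point — is the inner integral $\int_{x_0-\delta}^{x_0+\delta}\psi(t)K_{\lambda,m}(x_0,t)\,dt$, where the $o(s)$-estimate for $\psi$ must be converted into a bound on a $\psi$-weighted integral of $K$. I would treat $[x_0,x_0+\delta]$ and $[x_0-\delta,x_0]$ separately. On $[x_0,x_0+\delta]$ the kernel $K_{\lambda,m}(x_0,\cdot)$ is non-increasing; writing $\Phi(s)=\int_{x_0}^{x_0+s}\psi$ and integrating by parts in the Riemann--Stieltjes sense against $K_{\lambda,m}(x_0,\cdot)$ (equivalently, a Fubini argument with the non-negative measure $-d_tK_{\lambda,m}(x_0,t)$),
\[
\int_{x_0}^{x_0+\delta}\psi(t)K_{\lambda,m}(x_0,t)\,dt=\Phi(\delta)\,K_{\lambda,m}(x_0,x_0+\delta)+\int_{x_0}^{x_0+\delta}\Phi(t-x_0)\,\bigl(-d_tK_{\lambda,m}(x_0,t)\bigr);
\]
now $\Phi(t-x_0)\le\varepsilon(t-x_0)$, and one further integration by parts collapses the right-hand side to $\varepsilon\int_{x_0}^{x_0+\delta}K_{\lambda,m}(x_0,t)\,dt$. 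The symmetric estimate on $[x_0-\delta,x_0]$ uses that $K_{\lambda,m}(x_0,\cdot)$ is non-decreasing there, whence
\[
\int_{x_0-\delta}^{x_0+\delta}\psi(t)K_{\lambda,m}(x_0,t)\,dt\le\varepsilon\int_{x_0-\delta}^{x_0+\delta}K_{\lambda,m}(x_0,t)\,dt\le\varepsilon\int_a^b K_{\lambda,m}(x_0,t)\,dt,
\]
which by (c) is $\le\varepsilon(C_m+1)$ for all large $\lambda$. (Bonnet's form of the second mean value theorem would also serve, but then one has to argue around the possibility that $K_{\lambda,m}(x_0,x_0)\to\infty$; the integration-by-parts form sidesteps this.) Combining the three pieces gives $\limsup_{\lambda\to\infty}|I_m(\lambda)-C_m f^m(x_0)|\le\varepsilon(C_m+1)$, and since $\varepsilon>0$ is arbitrary, $I_m(\lambda)\to C_m f^m(x_0)$; summing over $m=1,\dots,N$ finishes the proof. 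The same scheme covers $L_1(-\infty,\infty)$ once the outer contribution is rewritten as $\int f^m K_{\lambda,m}-f^m(x_0)\int K_{\lambda,m}$ and the mass-concentration expressed by (c) is invoked in the form appropriate to an infinite interval, since $\psi$ need no longer be integrable far from $x_0$.
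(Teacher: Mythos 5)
Your proposal is correct and follows essentially the same route as the paper: the same split of the error into the $\bigl(\int_a^b K_{\lambda,m}-C_m\bigr)$ term handled by condition (c), the tails over $[a,x_0-\delta]$ and $[x_0+\delta,b]$ handled by monotonicity (b) plus condition (d), and the core $[x_0-\delta,x_0+\delta]$ handled by the Lebesgue-point estimate together with two Riemann--Stieltjes integrations by parts against the monotone kernel. The only (cosmetic) differences are that you work directly with $\psi=|f^m-f^m(x_0)|$ after noting that a Lebesgue point of $f$ is one of $f^m$ via the explicit constant $mM^{m-1}$, where the paper keeps $|f-f(x_0)|$ and an unspecified bound $M$, and that you record the $\limsup\le\varepsilon(C_m+1)$ bookkeeping more carefully at the end.
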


\begin{proof}
For integral (1) we can write%
\begin{eqnarray*}
L_{\lambda }(f,x_{0})-\dsum\limits_{m=1}^{N}C_{m}f^{m}(x_{0})
&=&\dsum\limits_{m=1}^{N}\dint\limits_{a}^{b}f^{m}(t)-f^{m}(x_{0})K_{\lambda
,m}(x_{0},t)dt \\
&&+\dsum\limits_{m=1}^{N}f^{m}(x_{0})\left[ \dint\limits_{a}^{b}K_{\lambda
,m}(x_{0},t)dt-C_{m}\right]
\end{eqnarray*}%
and in view of a) 
\begin{eqnarray*}
\left\vert L_{\lambda
}(f,x_{0})-\dsum\limits_{m=1}^{N}C_{m}f^{m}(x_{0})\right\vert &\leq
&\dsum\limits_{m=1}^{N}\dint\limits_{a}^{b}\left\vert
f^{m}(t)-f^{m}(x_{0})\right\vert K_{\lambda ,m}(x_{0},t)dt \\
&&+\dsum\limits_{m=1}^{N}\left\vert f^{m}(x_{0})\right\vert \left\vert
\dint\limits_{a}^{b}K_{\lambda ,m}(x_{0},t)dt-C_{m}\right\vert \\
&=&I_{1}(x_{0},\lambda )+I_{2}(x_{0},\lambda ).
\end{eqnarray*}%
It is suffcient to show that terms on right hand side of the last inequality
tend to zero as $\lambda \rightarrow \infty .$By property c) ,it is clear
that $I_{2}(x_{0},\lambda )$ tends to zero as $\lambda \rightarrow \infty .$

Now we consider $I_{1}(x_{0},\lambda ).$ For any fixed $\delta >0$, we can
write $I_{1}(x_{0},\lambda )$ as follow.

\begin{eqnarray}
I_{1}(x_{0},\lambda ) &=&\dsum\limits_{m=1}^{N}\left[ \dint%
\limits_{a}^{x_{0}-\delta }+\dint\limits_{x_{0}-\delta
}^{x_{0}}+\dint\limits_{x_{0}}^{x_{0}+\delta }+\dint\limits_{x_{0}+\delta
}^{b}\right] \left\vert f^{m}(t)-f^{m}(x_{0})\right\vert K_{\lambda
,m}(x_{0},t)dt  \TCItag{2} \\
&=&I_{11}(x_{0},\lambda ,m)+I_{12}(x_{0},\lambda ,m)+I_{13}(x_{0},\lambda
,m)+I_{14}(x_{0},\lambda ,m)  \notag
\end{eqnarray}%
Firstly we shall calculate $I_{11}(x_{0},\lambda ,m)$, that's

\begin{equation*}
I_{11}(x_{0},\lambda
,m)=\dsum\limits_{m=1}^{N}\dint\limits_{a}^{x_{0}-\delta }\left\vert
f^{m}(t)-f^{m}(x_{0})\right\vert K_{\lambda ,m}(x_{0},t)dt.
\end{equation*}%
By the condition b), we have

\begin{equation}
I_{11}(x_{0},\lambda ,m)\leq \dsum\limits_{m=1}^{N}K_{\lambda
,m}(x_{0},x_{0}-\delta )\left\{ \dint\limits_{a}^{x_{0}-\delta }\left\vert
f^{m}(t)\right\vert dt+x\dint\limits_{a}^{x_{0}-\delta }\left\vert
f^{m}(x_{0})\right\vert dt\right\}  \notag
\end{equation}%
and%
\begin{equation}
\leq \dsum\limits_{m=1}^{N}K_{\lambda ,m}(x_{0},x_{0}-\delta )\left\{
\left\Vert f^{m}\right\Vert _{L_{1}(a,b)}+\left\vert f^{m}(x_{0})\right\vert
(b-a)\right\}  \tag{3}
\end{equation}%
In the same way, we can estimate $I_{14}(x_{0},\lambda ,m)$.From property b)

\begin{eqnarray}
I_{14}(x_{0},\lambda ,m) &\leq &\dsum\limits_{m=1}^{N}K_{\lambda
,m}(x_{0},x_{0}+\delta )\left\{ \dint\limits_{x_{0}+\delta }^{b}\left\vert
f^{m}(t)\right\vert dt+\dint\limits_{x_{0}+\delta }^{b}\left\vert
f^{m}(x_{0})\right\vert dt\right\}  \notag \\
&\leq &\dsum\limits_{m=1}^{N}K_{\lambda ,m}(x_{0},x_{0}+\delta )\left\{
\left\Vert f^{m}\right\Vert _{L_{1}(a,b)}+\left\vert f^{m}(x_{0})\right\vert
(b-a)\right\} .  \TCItag{4}
\end{eqnarray}%
On the other hand, Since $x_{0}$ is a lebesque point of $f,$ for every $%
\varepsilon >0,$ there exists a $\delta >0$ such that

\begin{equation}
\dint\limits_{x_{0}}^{x_{0}+h}\left\vert f(t)-f(x_{0})\right\vert
dt<\varepsilon h  \tag{5}
\end{equation}%
and%
\begin{equation}
\dint\limits_{x_{0}-h}^{x_{0}}\left\vert f(t)-f(x_{0})\right\vert
dt<\varepsilon h  \tag{6}
\end{equation}%
for all $0<h\leq \delta .$Now let's define a new function as follows,%
\begin{equation*}
F(t)=\dint\limits_{x_{0}}^{t}\left\vert f(u)-f(x_{0})\right\vert du.
\end{equation*}%
Then from (5), for $t-x_{0}\leq \delta $ we have 
\begin{equation*}
F(t)\leq \varepsilon (t-x_{0}).
\end{equation*}%
Also, since $f$ is bounded, there exists $M>0$ such that 
\begin{equation*}
\left\vert f^{m}(t)-f^{m}(x_{0})\right\vert \leq \leq \left\vert
f(t)-f(x_{0})\right\vert M
\end{equation*}%
is satisfied. Therefore, we can estimate $I_{13}(x_{0},\lambda ,m)$ as
follows. 
\begin{eqnarray*}
I_{13}(x_{0},\lambda ,m) &\leq
&M\dsum\limits_{m=1}^{N}\dint\limits_{x_{0}}^{x_{0}+\delta }\left\vert
f(t)-f(x_{0})\right\vert K_{\lambda ,m}(x_{0},t)dt \\
&\leq &M\dsum\limits_{m=1}^{N}\dint\limits_{x_{0}}^{x_{0}+\delta }K_{\lambda
,m}(x_{0},t)dF(t).
\end{eqnarray*}%
We apply integration by part, then we obtain the following result. 
\begin{equation*}
\left\vert I_{13}(x_{0},\lambda ,m)\right\vert \leq
M\dsum\limits_{m=1}^{N}\left\{ F(x_{0}+\delta ,x_{0})K_{\lambda
,m}(x_{0}+\delta ,x_{0})+\dint\limits_{x_{0}}^{x_{0}+\delta }F(t)d\left(
-K_{\lambda ,m}(x_{0},t)\right) \right\} .
\end{equation*}%
Since $K_{\lambda ,m}$ is decreasing on $\left[ x_{0},b\right] $, it is
clear that $-K_{\lambda ,m}$ is increasing.Hence its differential is
positive. Therefore, we can wirte%
\begin{equation*}
\left\vert I_{13}(x_{0},\lambda ,m)\right\vert \leq
M\dsum\limits_{m=1}^{N}\left\{ \varepsilon \delta K_{\lambda
,m}(x_{0}+\delta ,x_{0})+\varepsilon \dint\limits_{x}^{x+\delta
}(t-x_{0})d\left( -K_{\lambda ,m}(x_{0},t)\right) \right\} .
\end{equation*}%
Integration by parts again we have the following inequality%
\begin{eqnarray}
\left\vert I_{13}(x_{0},\lambda ,m)\right\vert &\leq &\varepsilon
M\dsum\limits_{m=1}^{N}\dint\limits_{x_{0}}^{x_{0}+\delta }K_{\lambda
,m}(x_{0},t)dt  \notag \\
&\leq &\varepsilon M\dsum\limits_{m=1}^{N}\dint\limits_{a}^{b}K_{\lambda
,m}(x_{0},t)dt.  \TCItag{7}
\end{eqnarray}%
Now, we can use similar method for evaluation $I_{12}(x_{0},\lambda ,m).$ Let%
\begin{equation*}
G(t)=\int_{t}^{x}\left\vert f(y)-f(x)\right\vert dy.
\end{equation*}%
Then, the statement 
\begin{equation*}
dG(t)=-\left\vert f(t)-f(x_{0})\right\vert dt.
\end{equation*}%
is satisfied. For $x_{0}-t\leq \delta $, by using (6),it can be written as
follows%
\begin{equation*}
G(t)\leq \varepsilon \left\vert x_{0}-t\right\vert
\end{equation*}%
Hence, we get%
\begin{equation*}
I_{12}(x_{0},\lambda ,m)\leq
M\dsum\limits_{m=1}^{N}\dint\limits_{x_{0}-\delta }^{x_{0}}\left\vert
f(t)-f(x_{0})\right\vert K_{\lambda ,m}(x_{0},t)dt.
\end{equation*}%
Then, we shall write\newline
\begin{equation*}
\left\vert I_{12}(x_{0},\lambda ,m)\right\vert \leq M\dsum\limits_{m=1}^{N} 
\left[ -\dint\limits_{x_{0}-\delta }^{x_{0}}K_{\lambda ,m}(x_{0},t)dG(t)%
\right] .
\end{equation*}%
By integration of parts, we have%
\begin{equation*}
\left\vert I_{12}(x,\lambda ,m)\right\vert \leq
M\dsum\limits_{m=1}^{N}\left\{ G(x-\delta K_{\lambda ,m}(x-\delta
,x)+\dint\limits_{x-\delta }^{x}G(t)d_{t}(K_{\lambda ,m}(x,t))\right\} .
\end{equation*}%
From (6), we obtain 
\begin{equation*}
\left\vert I_{12}(x_{0},\lambda ,m)\right\vert \leq
M\dsum\limits_{m=1}^{N}\left\{ \varepsilon \delta K_{\lambda
,m}(x_{0},x_{0}-\delta )+\varepsilon \dint\limits_{x_{0}-\delta
}^{x_{0}}(x_{0}-t)d_{t}(K_{\lambda ,m}(x_{0},t))\right\} .
\end{equation*}%
By using integration of parts again, we find 
\begin{equation}
\left\vert I_{12}(x_{0},\lambda ,m)\right\vert \leq \varepsilon
M\dsum\limits_{m=1}^{N}\dint\limits_{a}^{b}K_{\lambda ,m}(x_{0},t)dt. 
\tag{8}
\end{equation}%
Combined (7) and (8), we get%
\begin{equation}
\left\vert I_{12}(x_{0},\lambda ,m)\right\vert +\left\vert
I_{13}(x_{0},\lambda ,m)\right\vert \leq 2\varepsilon
M\dsum\limits_{m=1}^{N}\dint\limits_{a}^{b}K_{\lambda ,m}(x_{0},t)dt. 
\tag{9}
\end{equation}%
Finally,from (3), (4) and (9),the terms on right hand side of these
inequalitys tend to 0 as $\lambda \rightarrow \infty $. That's 
\begin{equation*}
\lim_{\lambda \rightarrow \infty }L_{\lambda
}(f,x_{0})=\dsum\limits_{m=1}^{N}C_{m}f^{m}(x_{0}).
\end{equation*}%
Thus, the proof is completed.
\end{proof}

In this theorem, specially it may be $a=-\infty $ and $b=\infty $. In this
case, we can give the following theorem.

\begin{theorem}
Let $f\in L_{1}(-\infty ,\infty )$ and $f$ is bounded. If non-negative the
kernel $K_{\lambda ,m}$ belongs to Class A and satisfies also the following
properties ,%
\begin{equation}
\lim_{\lambda \rightarrow \infty }\dint\limits_{-\infty }^{x-\delta
}K_{\lambda ,m}(t,x)dt=0  \tag{10}
\end{equation}%
and%
\begin{equation}
\lim_{\lambda \rightarrow \infty }\dint\limits_{x+\delta }^{\infty
}K_{\lambda ,m}(t,x)dt=0,  \tag{11}
\end{equation}%
then the statement 
\begin{equation*}
\lim_{\lambda \rightarrow \infty }L_{\lambda }(f,x)=f(x).
\end{equation*}%
is satisfied at almost every $x\in R$.

\begin{proof}
We can write%
\begin{eqnarray*}
\left\vert L_{\lambda }(f,x)-\dsum\limits_{m=1}^{N}C_{m}f^{m}(x)\right\vert
&\leq &\dsum\limits_{m=1}^{N}\dint\limits_{-\infty }^{\infty }\left\vert
f^{m}(t)-f^{m}(x)\right\vert K_{\lambda ,m}(x,t)dt \\
&&+\dsum\limits_{m=1}^{N}\left\vert f^{m}(x)\right\vert \left\vert
\dint\limits_{-\infty }^{\infty }K_{\lambda ,m}(x,t)dt-C_{m}\right\vert \\
&=&A_{1}(x,\lambda )+A_{2}(x,\lambda ).
\end{eqnarray*}%
It is clear that $A_{2}(x,\lambda )\rightarrow 0$ as $\lambda \rightarrow
\infty .$

For a fixed $\delta >0,$ we divide the integral $A_{1}(x,\lambda )$ of the
form 
\begin{eqnarray*}
A_{1}(x,\lambda ) &=&\dsum\limits_{m=1}^{N}\left[ \dint\limits_{-\infty
}^{x-\delta }+\dint\limits_{x-\delta }^{x}+\dint\limits_{x}^{x+\delta
}+\dint\limits_{x+\delta }^{\infty }\right] \left\vert
f^{m}(t)-f^{m}(x)\right\vert K_{\lambda ,m}(x,t)dt \\
&=&A_{11}(x,\lambda ,m)+A_{12}(x,\lambda ,m)+A_{13}(x,\lambda
,m)+A_{14}(x,\lambda ,m).
\end{eqnarray*}%
$A_{12}(x,\lambda ,m)$ and $A_{13}(x,\lambda ,m)$ integrals are calculated
as above the proof.For proof, it is sufficent to show that $A_{11}(x,\lambda
,m)$ and $A_{14}(x,\lambda ,m)$ tend to zero as $\lambda \rightarrow \infty
. $

Firstly, we consider $A_{11}(x,\lambda ,m).$ Since $f$ is bounded and by the
property b),this integration is written the form%
\begin{eqnarray*}
A_{11}(x,\lambda ,m) &\leq &M\dsum\limits_{m=1}^{N}\dint\limits_{-\infty
}^{x-\delta }\left\vert f(t)-f(x)\right\vert K_{\lambda ,m}(x,t)dt \\
&\leq &M\dsum\limits_{m=1}^{N}K_{\lambda ,m}(x,x-\delta )\left\{
\dint\limits_{-\infty }^{x-\delta }\left\vert f(t)\right\vert \right\}
+M\left\vert f(x)\right\vert \dsum\limits_{m=1}^{N}\dint\limits_{-\infty
}^{x-\delta }K_{\lambda ,m}(x,t)dt \\
&\leq &\left\Vert f\right\Vert _{L_{1}(-\infty ,\infty
)}M\dsum\limits_{m=1}^{N}K_{\lambda ,m}(x,x-\delta )+M\left\vert
f(x)\right\vert \dsum\limits_{m=1}^{N}\dint\limits_{-\infty }^{x-\delta
}K_{\lambda ,m}(x,t)dt.
\end{eqnarray*}%
In addition to, we obtain the inequality 
\begin{eqnarray*}
A_{14}(x,\lambda ,m) &\leq &M\dsum\limits_{m=1}^{N}\dint\limits_{x+\delta
}^{\infty }\left\vert f(t)-f(x)\right\vert K_{\lambda ,m}(x,t)dt \\
&\leq &\left\Vert f\right\Vert _{L_{1}(-\infty ,\infty
)}M\dsum\limits_{m=1}^{N}mK_{\lambda ,m}(x,x+\delta )+M\left\vert
f(x)\right\vert \dsum\limits_{m=1}^{N}M\dint\limits_{x+\delta }^{\infty
}K_{\lambda ,m}(x,t)dt.
\end{eqnarray*}%
According to the conditions d), (10) and (11),we find that $A_{11}(x,\lambda
,m)+A_{14}(x,\lambda ,m)\rightarrow 0$ as $\lambda \rightarrow \infty .$This
completes the proof.
\end{proof}
\end{theorem}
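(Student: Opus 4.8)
The plan is to run the scheme of the previous theorem, isolating the two contributions that are genuinely new on an unbounded domain. First I would write, exactly as in the finite-interval case,
\[
\left\vert L_{\lambda}(f,x)-\sum_{m=1}^{N}C_{m}f^{m}(x)\right\vert \le A_{1}(x,\lambda)+A_{2}(x,\lambda),
\]
where $A_{2}(x,\lambda)=\sum_{m=1}^{N}\vert f^{m}(x)\vert\,\bigl\vert\int_{-\infty}^{\infty}K_{\lambda,m}(x,t)\,dt-C_{m}\bigr\vert\to 0$ by condition c), and $A_{1}(x,\lambda)=\sum_{m=1}^{N}\int_{-\infty}^{\infty}\vert f^{m}(t)-f^{m}(x)\vert\,K_{\lambda,m}(x,t)\,dt$. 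Fix $\delta>0$ and split $\mathbb{R}$ into $(-\infty,x-\delta)$, $(x-\delta,x)$, $(x,x+\delta)$, $(x+\delta,\infty)$, obtaining $A_{11}+A_{12}+A_{13}+A_{14}$. I would also record at the outset the elementary facts that will be used repeatedly: since $f$ is bounded with $M=\Vert f\Vert_{\infty}$, factoring $a^{m}-b^{m}$ gives $\vert f^{m}(t)-f^{m}(x)\vert\le M'\vert f(t)-f(x)\vert$ with $M'=NM^{N-1}$; and since $f\in L_{1}(-\infty,\infty)$ is bounded, each $f^{m}\in L_{1}(-\infty,\infty)$, so all $L_{1}$-norms below are finite.

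Second, the two central pieces $A_{12}$ and $A_{13}$ are estimated verbatim as $I_{12}$ and $I_{13}$ in the proof of Theorem~2: the only ingredients there are the monotonicity of $K_{\lambda,m}$ in $t$ (condition b)), the bound $\vert f^{m}(t)-f^{m}(x)\vert\le M'\vert f(t)-f(x)\vert$, and the Lebesgue-point inequalities (5)--(6). Since almost every $x\in\mathbb{R}$ is a Lebesgue point of $f$, this step is valid for a.e.\ $x$ and yields $\vert A_{12}\vert+\vert A_{13}\vert\le 2\varepsilon M'\sum_{m=1}^{N}\int_{-\infty}^{\infty}K_{\lambda,m}(x,t)\,dt$, whose right-hand side stays bounded by $2\varepsilon M'\sum_{m=1}^{N}C_{m}$ in the limit (by c)) and hence is small by the choice of $\varepsilon$.

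Third, and this is where the extra hypotheses enter, I would handle the tails. On $(-\infty,x-\delta)$ condition b) gives $K_{\lambda,m}(x,t)\le K_{\lambda,m}(x,x-\delta)$, so
\[
A_{11}(x,\lambda,m)\le M'\sum_{m=1}^{N}K_{\lambda,m}(x,x-\delta)\,\Vert f\Vert_{L_{1}(-\infty,\infty)}+M'\vert f(x)\vert\sum_{m=1}^{N}\int_{-\infty}^{x-\delta}K_{\lambda,m}(x,t)\,dt;
\]
the first term tends to $0$ by condition d) and the second by hypothesis (10). The piece $A_{14}$ is treated symmetrically using b), d) and (11). Combining: given $\varepsilon>0$ choose $\delta$ from the Lebesgue-point property, then let $\lambda\to\infty$ so that $A_{2}$, $A_{11}$, $A_{14}$ and the remaining $\bigl\vert\int K_{\lambda,m}-C_{m}\bigr\vert$ factor all become negligible, leaving a bound by $2\varepsilon M'\sum_{m=1}^{N}C_{m}$; since $\varepsilon>0$ is arbitrary, $\lim_{\lambda\to\infty}L_{\lambda}(f,x)=\sum_{m=1}^{N}C_{m}f^{m}(x)$ for a.e.\ $x$.

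The only real obstacle is organizational: ensuring that the order of limits is legitimate, i.e.\ that $\delta$ is fixed once and for all from the Lebesgue condition \emph{before} the kernel-dependent limits (d), (10), (11)) are taken, and checking that (10)--(11) are precisely what is needed to kill the tail terms that, on a finite interval, were absorbed by property d) together with finiteness of $b-a$. No new analytic device beyond the integration-by-parts argument of Theorem~2 is required.
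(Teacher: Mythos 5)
Your proposal is correct and follows essentially the same route as the paper's own proof: the same $A_{1}+A_{2}$ splitting, the same four-piece decomposition of $A_{1}$, the middle pieces handled by the Lebesgue-point/integration-by-parts argument of the first theorem, and the tails killed by condition d) together with (10) and (11). Your added care about the constant $M'=NM^{N-1}$ and the order in which $\delta$, $\varepsilon$ and $\lambda$ are fixed only tightens the same argument (and you correctly identify that what is actually proved is convergence to $\sum_{m}C_{m}f^{m}(x)$).
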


\bigskip 

This article is presented in " Young Scientist's International Baku Form,
May, 2013.

\end{document}